\newtheorem{lemma}{Lemma}[section]
\newtheorem{lemma*}{Lemma}
\newtheorem{theorem}[lemma]{Theorem}
\newtheorem{cor}[lemma]{Corollary}
\newtheorem{claim*}{Claim}
\newtheorem{defn}[lemma]{Definition}
\theoremstyle{definition}
\newtheorem*{lem}{Acknowledgements}
\newtheorem*{note}{Notation}
\newtheorem{rmk}[lemma]{Remark}
\newcommand{\Q}{{\mathbb Q}}
\newcommand{\Z}{{\mathbb Z}}
\numberwithin{equation}{section}
\numberwithin{table}{section}
\title{Distinguishing some genus one knots using finite quotients}
\author{Tamunonye Cheetham-West}
\begin{document}
\bibliographystyle{plain}

\maketitle
\begin{abstract}
    We give a criterion for distinguishing a prime knot $K$ in $S^3$ from every other knot in $S^3$ using the finite quotients of $\pi_1(S^3\setminus K)$. Using recent work of Baldwin-Sivek, we apply this criterion to the hyperbolic knots $5_2$, $15n_{43522}$, and the three-strand pretzel knots $P(-3,3,2n+1)$ for every integer $n$. 
\end{abstract}

\section{Introduction}
Finite quotients of the fundamental group are useful for distinguishing 3-manifolds; in particular, for a 3-manifold $M$, a finite quotient of $\pi_1(M)$ corresponds to the deck group of a finite-sheeted regular cover of $M$. If the fundamental groups of two 3-manifolds $M$ and $N$ have different finite quotients, then $\pi_1(M)\not\cong\pi_1(N)$ and the 3-manifolds $M$ and $N$ are not homeomorphic. When $M$ is a compact 3-manifold, its fundamental group $\pi_1(M)$ is residually finite \cite{HempelRF}, and so the set $C(\pi_1(M))$ of finite quotients of $\pi_1(M)$ is non-empty and infinite.  \smallbreak One of the consequences of the residual finiteness of the fundamental groups of compact 3-manifolds is that the unknot is the only knot in $S^3$ whose knot complement has a fundamental group with only finite cyclic quotients. Boileau \cite{BoileauNotes} has conjectured that every prime knot $K\subset S^3$ is completely determined by the set of finite quotients of $\pi_1(S^3\setminus K)$, that is, if for two prime knots $J$ and $K$, $\pi_1(S^3\setminus J)$ and $\pi_1(S^3\setminus K)$ have the same set of finite quotients, then $J$ and $K$ are isotopic. By work of Boileau-Friedl \cite{BF} and Bridson-Reid \cite{BR}, it is known that the figure-eight knot $4_1$ and the trefoil knot $3_1$ are completely determined by the finite quotients of their knot groups even amongst the fundamental groups of compact 3-manifolds. Furthermore, Wilkes \cite{WilkesKnots} has shown that knots in $S^3$ whose complements are graph manifolds are distinguished by the finite quotients of their knot groups. 
\smallbreak The purpose of this note is to show:

\begin{figure}
     \centering
     \begin{subfigure}[b]{0.3\textwidth}
         \centering
         \includegraphics[scale=0.50]{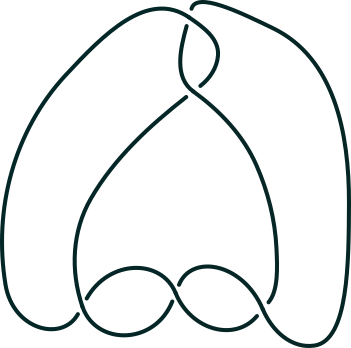}
         \caption{$5_2$}
         \label{fig:five_two}
     \end{subfigure}
     \hspace{0.5em}
     \begin{subfigure}[b]{0.3\textwidth}
         \centering
         \includegraphics[scale=0.25]{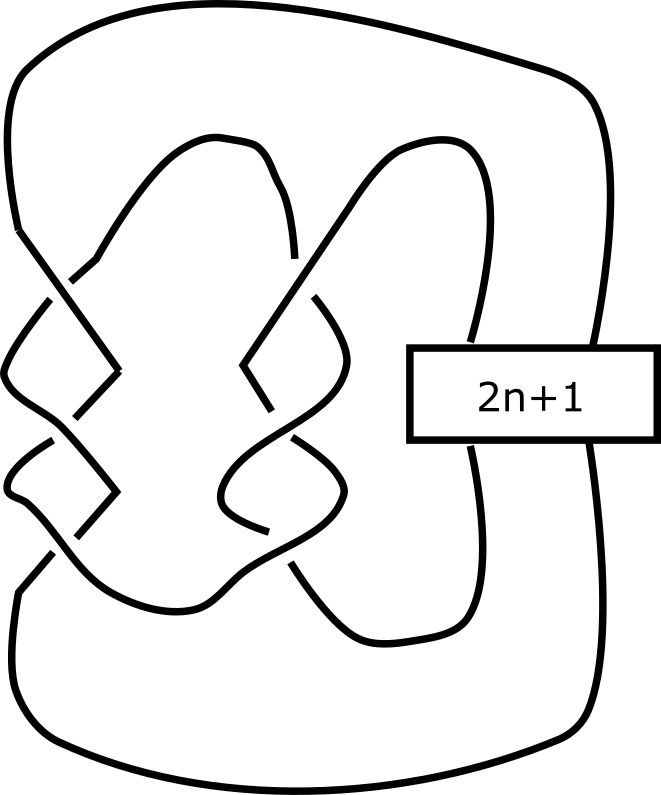}
         \caption{$P(-3,3,2n+1)$}
         \label{fig:p-332n+1}
    \end{subfigure}
     \hspace{0.5em}
     \begin{subfigure}[b]{0.3\textwidth}
         \centering
         \includegraphics[scale=0.35]{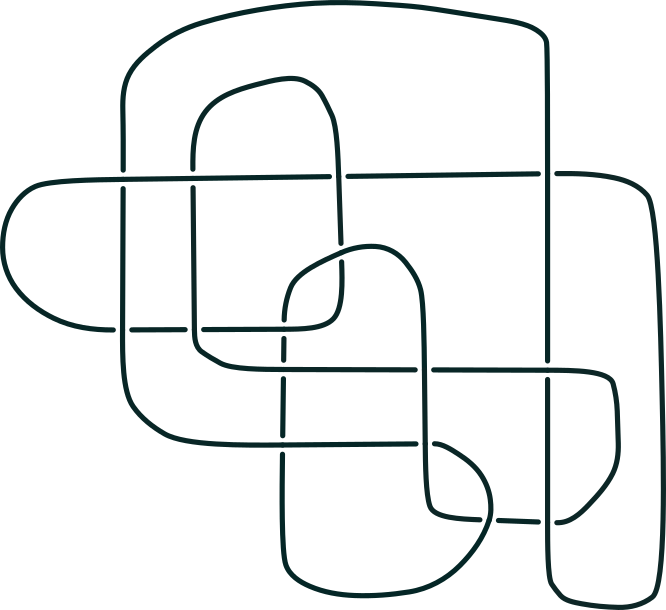}
         \caption{$15n_{43522}$}
         \label{fig:15n}
     \end{subfigure}
     \caption{Hyperbolic knots in the statement of Theorem~\ref{Bsivek}}
     \label{fig:knots}
\end{figure}

\begin{theorem}\label{Bsivek}
Let $K$ be the knot $5_2$ (shown in Figure~\ref{fig:knots}A), one of the hyperbolic pretzel knots $P(-3,3,2n+1)\,\,(n\in\Z)$ (Figure~\ref{fig:knots}B), or the knot $15n_{43522}$ (shown in Figure~\ref{fig:knots}C), then $K$ is distinguished from every other knot in $S^3$ by the finite quotients of $\pi_1(S^3\setminus K)$.
\end{theorem}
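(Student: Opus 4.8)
The plan is to show that if $J$ is any knot in $S^3$ having the same finite quotients as $K$---equivalently, with $\widehat{\pi_1(S^3\setminus J)}\cong\widehat{\pi_1(S^3\setminus K)}$---then $J$ is isotopic to $K$, necessarily only up to mirror image, since a knot and its mirror have homeomorphic complements and hence identical finite quotients. Writing $G_K=\pi_1(S^3\setminus K)$ and $G_J=\pi_1(S^3\setminus J)$, abelianizing forces $H_1=\Z$ on both sides, so an isomorphism $\Phi\colon\widehat{G_K}\to\widehat{G_J}$ matches the homologically-distinguished meridians. The whole argument then routes through Dehn surgery: the goal is to exhibit a slope $\gamma$ on $K$ for which $Y:=S^3_\gamma(K)$ is a closed $3$-manifold that is determined by its own finite quotients, and for which $K$ is the \emph{only} knot in $S^3$ (up to mirror) admitting a filling homeomorphic to $Y$. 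This uniqueness is precisely the input supplied by Baldwin--Sivek, who identify such characterizing slopes with Seifert fibered surgeries for each of $5_2$, the family $P(-3,3,2n+1)$, and $15n_{43522}$.

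The mechanism of the criterion I would invoke is the following chain. First, $\Phi$ can be arranged to respect peripheral structure: by the profinite detection of peripheral subgroups for $3$-manifold groups (Wilton--Zalesskii), $\Phi$ carries the closure of the peripheral $\Z^2=\langle\mu,\lambda\rangle$ of $K$ onto that of $J$. Since profinite completion is right-exact, killing the normal closure of $\mu^{p}\lambda^{q}$ commutes with it, so $\widehat{\pi_1(Y)}$ is the quotient of $\widehat{G_K}$ by the closed normal closure of $\hat\mu^{p}\hat\lambda^{q}$. Transporting this closed normal subgroup across $\Phi$ identifies $\widehat{\pi_1(Y)}$ with $\widehat{G_J}/\overline{\langle\langle\Phi(\hat\mu^{p}\hat\lambda^{q})\rangle\rangle}$. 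If this quotient can be recognized as the profinite completion of an honest Dehn filling $S^3_{\gamma'}(J)$, and if $Y$ is profinitely rigid among $3$-manifolds---which for the Seifert fibered fillings is covered by Wilkes' analysis of which Seifert fibered spaces are distinguished by their finite quotients \cite{WilkesKnots}, once one checks $Y$ avoids the exceptional non-rigid examples---then $S^3_{\gamma'}(J)\cong Y$, whereupon the Baldwin--Sivek uniqueness forces $J$ to be isotopic to $K$ up to mirror image.

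The step I expect to be the main obstacle is exactly the recognition and pinning-down of the slope $\gamma'$, i.e. ruling out that the peripheral scaling turns the honest slope $\gamma$ into a genuinely different or even non-rational slope on $J$. Concretely $\Phi$ may send $\hat\mu\mapsto\hat\mu^{a}$ and $\hat\lambda\mapsto\hat\mu^{b}\hat\lambda^{c}$ with $a,c\in\widehat{\Z}^{\times}$ and $b\in\widehat{\Z}$, so that $\Phi(\hat\mu^{p}\hat\lambda^{q})=\hat\mu^{pa+qb}\hat\lambda^{qc}$, whose two exponents scale incompatibly. The tools to control this are that the closed procyclic subgroup generated by an element is unchanged under raising to a $\widehat{\Z}^{\times}$-power (absorbing the longitudinal factor $c$), that the meridian is pinned down up to sign because it generates $H_1$, and that the order of $H_1(S^3_\gamma(K))$ is a profinite invariant of the filling. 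Combining these homological constraints is what should force $\gamma'$ to realize the same $Y$. This is cleanest when the characterizing surgery is a homology sphere, where $H_1$ of the filling is trivial and the coefficient is rigid, and more delicate for the rational characterizing slopes arising in the $P(-3,3,2n+1)$ family.

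The remaining work is verification rather than new ideas: for each knot in the list I would quote from Baldwin--Sivek an explicit slope $\gamma$ with $S^3_\gamma(K)$ a Seifert fibered space $Y$ for which $K$ is the unique knot (up to mirror) realizing that filling, and then confirm that $Y$ lies in the profinitely rigid range of \cite{WilkesKnots}. For the one-parameter family $P(-3,3,2n+1)$ I would additionally check that the slope $\gamma=\gamma(n)$ and the resulting base orbifold depend on $n$ so as to keep $Y$ rigid for every integer $n$, making the argument genuinely uniform across the family. Assembling these ingredients yields Theorem~\ref{Bsivek}.
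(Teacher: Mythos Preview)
Your high-level strategy---pass to a characterizing Dehn filling whose closed manifold is profinitely rigid, then pull the identification back---is exactly the paper's. But the execution diverges from the paper at precisely the point you flag as the obstacle, and the paper's choices are made specifically to dissolve that obstacle rather than fight it.

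First, a factual correction: the Baldwin--Sivek input is that the slope $0$ is characterizing for each of these knots. The resulting $0$-surgeries are \emph{not} Seifert fibered; for $5_2$ and the $P(-3,3,2n+1)$ they are graph manifolds obtained by regluing the two boundary tori of the $(2,4)$-torus link complement, and for $15n_{43522}$ a similar single-vertex, single-loop graph manifold. Their profinite rigidity therefore comes from Wilkes' graph-manifold theorem (non-bipartite JSJ graph), not from the Seifert fibered classification you cite.

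Second, and more importantly, the reason the paper uses slope $0$ is exactly to avoid the peripheral-scaling problem you describe. Your claim that the meridian is ``pinned down up to sign because it generates $H_1$'' is false in the profinite world: $\Aut(\widehat{\Z})=\widehat{\Z}^\times$ is uncountable, so $\Phi$ can send $\hat\mu$ to $\hat\mu^a$ for any unit $a$, and your exponent $pa+qb$ need not be a rational multiple of $qc$. The homological constraints you propose (order of $H_1$ of the filling, absorbing $\widehat{\Z}^\times$-powers into procyclic closures) do not force $\gamma'=\gamma$ for a general rational slope. By contrast, the longitude is the \emph{unique} slope intrinsically characterized by homology: $\langle\lambda\rangle = P\cap\ker\big(\pi_1\twoheadrightarrow\Z\big)$, and this description survives profinite completion. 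The paper's Lemma shows that any isomorphism $\widehat{G_J}\cong\widehat{G_K}$ carries $\overline{\langle\lambda_J\rangle}$ to a conjugate of $\overline{\langle\lambda_K\rangle}$, so the $0$-filling quotients match on the nose and $\widehat{\pi_1(S^3_0(J))}\cong\widehat{\pi_1(S^3_0(K))}$ with no slope ambiguity at all. That is the missing idea in your proposal: choose the one slope that is profinitely canonical, rather than trying to control an arbitrary one.
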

Theorem~\ref{Bsivek} will follow from our next theorem, using recent work of Baldwin-Sivek \cite{BaldwinSivek}\cite{BaldwinSivek2}, and the work of Wilkes \cite{wilkes2018} (as described in Section~\ref{proofs}). To state the theorem, we recall the definition of a {\it characterizing slope} $\alpha\in \Q$ for a knot $K$. For a knot $K\subset S^3$, let $S^3_\alpha(K)$ be the 3-manifold obtained by $\alpha-$Dehn surgery on $K$.
\begin{defn}\label{charslope}
A slope $\alpha$ is a characterizing slope for a knot $K\subset S^3$ if for any knot $J\subset S^3$, $S^3_\alpha(J)\cong S^3_\alpha(K)$ if and only if $J$ is isotopic to $K.$
\end{defn}

\begin{theorem}\label{mainthm}
Let $K$ be a hyperbolic knot in $S^3$ for which
\begin{enumerate}
    \item 0 is a characterizing slope for $K$,
    \item $S^3_0(K)$ is distinguished from every other compact, irreducible 3-manifold by the finite quotients of $\pi_1(S^3_0(K))$.
\end{enumerate}
then $K$ is distinguished from other knots in $S^3$ by the finite quotients of $\pi_1(S^3\setminus K)$. 
\end{theorem}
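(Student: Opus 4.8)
The plan is to convert the hypothesis on finite quotients into a profinite isomorphism of knot groups, push this isomorphism through $0$-Dehn filling to obtain a profinite isomorphism of the $0$-surgeries, promote that to an honest homeomorphism $S^3_0(J)\cong S^3_0(K)$ using hypothesis (2), and finally invoke hypothesis (1) to conclude $J\simeq K$. So suppose $J\subset S^3$ is a knot with the same finite quotients as $K$, i.e. $C(\pi_1(S^3\setminus J))=C(\pi_1(S^3\setminus K))$. Since knot groups are finitely generated, having the same finite quotients is equivalent to having isomorphic profinite completions, so I would fix an isomorphism $\Phi\colon\widehat{\pi_1(S^3\setminus K)}\to\widehat{\pi_1(S^3\setminus J)}$.

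The heart of the argument is to show that $\Phi$ respects peripheral structure well enough to descend to the $0$-fillings. Each knot group has abelianization $\Z$ generated by a meridian, so $\Phi$ induces an automorphism of $H_1=\widehat{\Z}$ (multiplication by a unit) under which meridians correspond up to that unit. The closed peripheral subgroups $\overline{P_K},\overline{P_J}\cong\widehat{\Z}^2$ are carried onto one another up to conjugacy by the profinite peripheral-detection results of Boileau--Friedl \cite{BF} and Wilkes \cite{wilkes2018}; within each peripheral subgroup the longitude spans exactly the kernel of the map to $H_1=\widehat{\Z}$, so $\Phi$ must send $\overline{\langle\lambda_K\rangle}$ to a conjugate of $\overline{\langle\lambda_J\rangle}$, and hence the closures of the normal closures of the longitudes correspond. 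Because profinite completion is right exact, meaning $\widehat{G/\langle\langle x\rangle\rangle}\cong\widehat{G}/\overline{\langle\langle x\rangle\rangle}$, the map $\Phi$ then descends to an isomorphism $\widehat{\pi_1(S^3_0(K))}\cong\widehat{\pi_1(S^3_0(J))}$; the unit scaling on meridians is irrelevant here because $0$-filling only kills the longitude.

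With the profinite isomorphism of $0$-surgeries in hand, I would first confirm that $S^3_0(J)$ is a compact, irreducible $3$-manifold so that hypothesis (2) applies. It is certainly a closed orientable $3$-manifold, and since irreducibility (equivalently, triviality of the Kneser--Milnor decomposition) is detected by the profinite completion among $3$-manifold groups by the work of Wilton--Zalesskii, the irreducibility of $S^3_0(K)$ forces $S^3_0(J)$ to be irreducible as well; in particular this rules out $J$ being the unknot, for which $\pi_1(S^3_0(J))\cong\Z$ would have no nonabelian finite quotients. Now $C(\pi_1(S^3_0(J)))=C(\pi_1(S^3_0(K)))$ together with hypothesis (2) yields $S^3_0(J)\cong S^3_0(K)$, and hypothesis (1), that $0$ is a characterizing slope for $K$, upgrades this to $J\simeq K$, as desired.

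I expect the peripheral step to be the main obstacle: what is needed is not merely that the abstract profinite groups are isomorphic, but that some isomorphism carries the peripheral torus subgroup of $K$ onto that of $J$ and matches the homologically trivial longitude directions, which is precisely the content of the cited profinite rigidity results for knot peripheral structure. A secondary point requiring care is orientation: one must check that the homeomorphism $S^3_0(J)\cong S^3_0(K)$ obtained from hypothesis (2) is compatible with the orientation convention implicit in Definition~\ref{charslope}.
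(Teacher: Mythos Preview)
Your proposal is correct and follows essentially the same route as the paper: profinite conjugacy of peripheral tori, together with the characterization of the longitude as the kernel of the map $\widehat{P}\to\widehat{\Z}$, shows the isomorphism descends to the $0$-fillings, after which hypotheses~(2) and~(1) finish the argument. Two minor corrections: the peripheral conjugacy result you invoke is Theorem~9.3 of Wilton--Zalesskii \cite{WZ1}, not \cite{BF} or \cite{wilkes2018}; and the paper handles irreducibility of $S^3_0(J)$ more directly by first observing (Theorem~A of \cite{WZ1}) that $J$ must itself be hyperbolic, hence nontrivial, so Gabai's Property~R applies.
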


We point out that every knot in $S^3$ has infinitely many characterizing slopes \cite{Lackenby}. On the other hand, it is known that some knots have infinitely many non-characterizing (integral) slopes \cite{BM}. Our note relies on recent work of Baldwin-Sivek \cite{BaldwinSivek}\cite{BaldwinSivek2} showing that 0 is a characterizing slope for a family of genus 1 knots. Prior to this, the only knots previously known to have 0 as a characterizing slope were the unknot (Property R), the trefoil, and the figure-eight knot \cite{GabaiR}. 
\medbreak Theorem~\ref{mainthm} also gives a different proof that the figure-eight knot is distinguished from every other knot in $S^3$, recovering a result in \cite{BF} and \cite{BR} (see Section~\ref{proofs}). 
\begin{cor}\label{newBRBF}
The knot $4_1$ is distinguished from every other knot in $S^3$ by the finite quotients of $\pi_1(S^3\setminus 4_1)$.
\end{cor}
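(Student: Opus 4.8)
The plan is to verify that the figure-eight knot satisfies the two hypotheses of Theorem~\ref{mainthm} and then to invoke that theorem directly. Since $4_1$ is hyperbolic, it lies in the scope of Theorem~\ref{mainthm}, so it remains only to check: (1) that $0$ is a characterizing slope for $4_1$; and (2) that $S^3_0(4_1)$ is distinguished from every other compact, irreducible $3$-manifold by the finite quotients of its fundamental group.

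Hypothesis (1) requires no new work: as recalled in the paragraph following Theorem~\ref{mainthm}, Gabai \cite{GabaiR} established that $0$ is a characterizing slope for the figure-eight knot.

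The content of the argument is hypothesis (2), which I expect to be the main obstacle. First I would identify the surgered manifold. The figure-eight knot is fibered of genus one, with monodromy acting on the first homology of the once-punctured-torus fiber by $\phi=\begin{pmatrix} 2 & 1 \\ 1 & 1 \end{pmatrix}\in\SL_2(\Z)$; capping off the fiber realizes $S^3_0(4_1)$ as the torus bundle over $S^1$ with this monodromy, and since $\tr\phi=3>2$ the monodromy is Anosov, so $S^3_0(4_1)$ is a closed orientable Sol manifold with $\pi_1\isom\Z^2\rtimes_\phi\Z$. I would then establish the profinite rigidity of this manifold among compact, irreducible $3$-manifolds in two stages. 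By the theorem of Wilton--Zalesskii that the geometry of a closed aspherical $3$-manifold is a profinite invariant, any $N$ with the same finite quotients as $S^3_0(4_1)$ must again be a closed Sol manifold; the profinite completion further detects orientability and distinguishes a torus bundle from a semi-bundle, so $N$ is an orientable Sol torus bundle as well. The delicate point is the second stage: two Sol torus bundles with monodromies $\phi,\psi$ are homeomorphic iff $\phi$ is conjugate to $\psi^{\pm1}$ in $\GL_2(\Z)$, whereas having isomorphic profinite completions only yields conjugacy of a power of $\phi$ in $\GL_2(\Zhat)$. These need not coincide in general---there exist Sol torus bundles that are profinitely isomorphic but not homeomorphic---so rigidity is not automatic and must be extracted from the arithmetic of the specific monodromy. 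Here the characteristic polynomial $\lambda^2-3\lambda+1$ has discriminant $5$, so $\phi$ generates the maximal order $\Z\bigl[\tfrac{1+\sqrt{5}}{2}\bigr]$ of $\Q(\sqrt{5})$, whose class number is $1$; this collapses the relevant set of $\GL_2(\Z)$-conjugacy classes to a single one and forces $N\cong S^3_0(4_1)$. This is exactly the profinite rigidity of such Sol manifolds supplied by the work of Wilkes \cite{wilkes2018}.

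With (1) and (2) in hand, Theorem~\ref{mainthm} applies and shows that $4_1$ is distinguished from every other knot in $S^3$ by the finite quotients of $\pi_1(S^3\setminus 4_1)$, giving a new proof of the results of \cite{BF} and \cite{BR}.
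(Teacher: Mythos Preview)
Your proposal is correct and follows essentially the same route as the paper: verify the two hypotheses of Theorem~\ref{mainthm} for $4_1$, with (1) supplied by Gabai \cite{GabaiR} and (2) obtained by identifying $S^3_0(4_1)$ as the Sol torus bundle with monodromy $\bigl(\begin{smallmatrix}2&1\\1&1\end{smallmatrix}\bigr)$ and using that $\Q(\sqrt{5})$ has class number~$1$. The only discrepancy is attributional: the paper invokes Funar's class-number bound (Corollary~1.2 of \cite{Funar}) directly for the profinite rigidity of this Sol bundle, whereas you cite \cite{wilkes2018}, which in this paper is the graph-manifold rigidity result used for the other knots; the correct reference here is Funar, though the underlying arithmetic argument you sketch is the same.
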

\begin{lem}
{\it The author thanks his advisor Alan Reid, and Ryan Spitler for many helpful conversations and for their support. The author also thanks John Baldwin and Steven Sivek for helpful conversations, for the proof of Theorem~\ref{BaldwinSivek3}, and for comments that improved this note.} 
\end{lem}
\section{Profinite Completions}
For a finitely generated, residually finite group $G$, we can organize the set of finite quotients of $G$ into an inverse system whose inverse limit $\widehat{G}$ is a finitely generated, profinite group, the {\it profinite completion} of $G$. There is a bijective correspondence between the finite index subgroups of $G$ and the open subgroups of $\widehat{G}$. There is a canonical inclusion $G\hookrightarrow \widehat{G}$ that has dense image because $G$ is residually finite. 
\medbreak Moreover, the assignment of finitely generated groups to their profinite completions is functorial, and an epimorphism of groups $G\twoheadrightarrow H$ will induce a continuous epimorphism of the profinite completions $\widehat{G}\twoheadrightarrow \widehat{H}$. The profinite completion completely captures the data of finite quotients of a group. In particular, two residually finite groups $G$ and $H$ will have isomorphic profinite completions if and only if they have the same set of finite quotients $C(G)=C(H)$ (\cite{RZ}, Corollary 3.2.8). 
\medbreak We say that a finitely generated group is {\it profinitely rigid} if it is completely determined by its profinite completion among all finitely generated, residually finite groups. We say a compact, orientable 3-manifold $M$ is {\it relatively profinitely rigid} if $\pi_1(M)$ is completely determined by its profinite completion among the fundamental groups of compact, orientable 3-manifolds. 
\begin{note}
For a subgroup $K<G$, we write the closure of $K$ in $\widehat{G}$ as $\overline{K}$. If $K$ is closed in the profinite topology on $G$, $\overline{K}\cong\widehat{K}$ and we say that $K$ is a {\it separable} subgroup of $G$. 
\end{note}
\section{Proofs}\label{proofs}
We first prove Theorem~\ref{Bsivek} and Corollary~\ref{newBRBF} assuming Theorem~\ref{mainthm}. As noted in the introduction, this relies on recent work of Baldwin-Sivek \cite{BaldwinSivek}\cite{BaldwinSivek2}. Using their classification of {\it nearly fibered} genus-1 knots, they prove:
\begin{theorem}[Theorem 1.1. \cite{BaldwinSivek}, Theorem 1.1 \cite{BaldwinSivek2}]\label{BaldwinSivek3}
Let $K$ be any of the knots $$5_2,\,\,15n_{43522},\,\,Wh^-(T_{2,3},2),\,\,Wh^+(T_{2,3},2),\,\,P(-3,3,2n+1)\,\,(n\in\Z)$$ or their mirrors. Then $0$ is a characterizing slope for $K$.
\end{theorem}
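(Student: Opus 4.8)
The plan is to prove the contrapositive-style statement directly: given a knot $J\subset S^3$ with $S^3_0(J)\cong S^3_0(K)$, recover $J$ (up to taking mirrors) from the homeomorphism type of $S^3_0(K)$. I would allow the identification of $0$-surgeries to be orientation-reversing from the outset, since reversing orientation corresponds exactly to replacing a knot by its mirror ($S^3_0(\overline{J})\cong -S^3_0(J)$); this is why the statement is phrased for the listed knots \emph{together with their mirrors}, and it means it suffices to pin down $J$ among these candidates.

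The first step is to reduce to the class of genus-$1$ nearly fibered knots. The Milnor--Turaev torsion of $S^3_0(J)$ is a homeomorphism invariant, equal up to units to $\Delta_J(t)/(t-1)^2$, so $\Delta_J=\Delta_K$. More importantly, Gabai's work shows that $0$-surgery detects the Seifert genus and fiberedness, and the extremal sutured (Heegaard or instanton) Floer homology of the genus-$1$ Seifert surface complement is encoded in $S^3_0(J)$. Together these force $g(J)=g(K)=1$ --- the unknot being excluded because $S^3_0(\text{unknot})=S^1\times S^2$ is reducible while $S^3_0(K)$ is irreducible (the capped minimal-genus Seifert surface is an incompressible torus, by \cite{GabaiR}) --- and force the rank of the top knot Floer group of $J$ to equal that of $K$, so that $J$ is again genus-$1$ \emph{nearly fibered}. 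I would flag that this detection is itself delicate: genus one is the borderline case for the Floer-theoretic adjunction bounds, so one must use the refined sutured-Floer input rather than reading off $\widehat{HF}(S^3_0)$ naively.

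With $J$ known to be genus-$1$ nearly fibered, I would invoke the Baldwin--Sivek classification \cite{BaldwinSivek}\cite{BaldwinSivek2}, which places $J$ in the explicit list: $5_2$, $15n_{43522}$, $Wh^{\pm}(T_{2,3},2)$, the pretzels $P(-3,3,2n+1)$, and their mirrors. It then remains to show that the members of this list have pairwise non-homeomorphic $0$-surgeries, so that matching $S^3_0(K)$ forces $J$ to be $K$ or its mirror. For candidates with distinct Alexander polynomials this follows immediately from the torsion computation above; for the remaining coincidences (in particular the Whitehead doubles, whose Alexander polynomial is trivial) I would separate using finer homeomorphism invariants of $S^3_0$: its geometric (JSJ) decomposition --- available since each such $S^3_0$ is toroidal, the capped genus-$1$ surface being an essential torus --- together with orientation-sensitive invariants such as the Casson--Walker invariant and the Heegaard/instanton Floer correction terms, which can detect the mirror.

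The main obstacle is this final separation, and within it two points stand out. First, for the pretzel family $P(-3,3,2n+1)$ one must produce a single invariant that is injective in the parameter $n$, distinguishing infinitely many $0$-surgeries simultaneously; the natural candidates are the geometry of the hyperbolic JSJ piece (whose volume or cusp shape should vary with $n$) or the family of torsions, and establishing the requisite injectivity is where the genuine computation lies. Second, chirality must be handled with care, since the knowledge of $S^3_0(K)$ only determines $J$ up to mirror unless an orientation-sensitive invariant is brought in. The classification itself (the input to the third step) is the deepest ingredient, but that is precisely the Baldwin--Sivek machinery I would take as given.
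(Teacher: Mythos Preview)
The paper does not contain a proof of this theorem. It is quoted verbatim as Theorem~1.1 of \cite{BaldwinSivek} and \cite{BaldwinSivek2} and used as a black box; the author says explicitly that Baldwin--Sivek ``prove'' it using their classification of nearly fibered genus-$1$ knots, and the acknowledgements thank Baldwin and Sivek ``for the proof of Theorem~\ref{BaldwinSivek3}.'' So there is nothing in this paper to compare your proposal against.

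That said, your sketch is a fair outline of the Baldwin--Sivek strategy (reduce to genus~$1$ nearly fibered via Gabai plus sutured Floer input, invoke the classification, then separate the finitely many families by invariants of $S^3_0$). One correction to your third step: for the pretzels $P(-3,3,2n+1)$ the $0$-surgeries are graph manifolds --- the paper itself notes that cutting along the capped Seifert torus yields the Seifert-fibered $(2,4)$-torus link complement --- so there is no ``hyperbolic JSJ piece'' whose volume or cusp shape could vary with $n$; the dependence on $n$ lives entirely in the self-gluing matrix, and that (or equivalently the Alexander polynomial/torsion, since $\Delta_{P(-3,3,2n+1)}$ does depend on $n$) is what separates the family.
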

\begin{proof}
(Theorem~\ref{Bsivek}) Let $K$ be the knot $5_2$ or the knot $P(-3,3,2n+1)$ for some $n\in\Z$, as in the statement of Theorem~\ref{Bsivek}. By Theorem~\ref{BaldwinSivek3}, $0$ is a characterizing slope for $K$. Let $T$ be an incompressible torus in $S^3_0(K)$ obtained by capping off a genus 1 Seifert surface for $K$. Since $K$ is a Montesinos knot that is not the trefoil, we can apply the proof of Lemma 3.1 \cite{BaldwinSivek2} to conclude that $S^3_0(K)$ is not Seifert fibered, and in addition, when we cut $S^3_0(K)$ along $T$, we obtain the complement of the $(2,4)-$torus link $T_{2,4}\subset S^3$, which is Seifert fibered over the annulus \cite{CantwellConlon}. Thus, $S^3_0(K)$ has a non-trivial JSJ decomposition with a JSJ graph having a single vertex and cycle, which in particular is not bipartite.   
\medbreak\noindent When $K=15n_{43522}$, by Theorem~\ref{BaldwinSivek3}, $0$ is a characterizing slope for $K$. Using Regina \cite{regina}, one can check that $S^3_0(K)$ is $\{A:(2,1)\}/(\begin{smallmatrix}3 &11 \\ 2&7\end{smallmatrix})$; the result of gluing the two torus boundary components of a Seifert-fibered space with base orbifold an annulus with one cone point by the homeomorphism $(\begin{smallmatrix}3 &11 \\ 2&7\end{smallmatrix})$. Thus, $S^3_0(K)$ has a non-trivial JSJ decomposition with JSJ graph having a single vertex and cycle and therefore the JSJ graph is not bipartite. 
\medbreak\noindent Thus, by Theorem A \cite{wilkes2018}, for all $K$ in the statement of Theorem~\ref{Bsivek}, $S^3_0(K)$ is (relatively) profinitely rigid. Hence, these knots all satisfy the hypothesis of Theorem~\ref{mainthm}, and the result follows.
\end{proof}

\begin{proof}
(Corollary~\ref{newBRBF}) By \cite{GabaiR}, $0$ is a characterizing slope for the figure-eight knot $4_1$. It follows from \cite{Funar} that $S^3_0(4_1)$ is relatively profinitely rigid. Briefly, the manifold $S^3_0(4_1)$ is a torus bundle with SOLV geometry and monodromy $(\begin{smallmatrix}2 &1 \\ 1&1\end{smallmatrix})$. The eigenvalues of this monodromy are $\lambda=\frac{3\pm \sqrt{5}}{2}$, and so generate $\Q(\sqrt{5})$ which has class number 1. By Corollary 1.2 in \cite{Funar}, the number of compact 3-manifolds whose fundamental groups have the same finite quotients as $S^3_0(4_1)$ is bounded above by the class number, 1. Hence $S^3_0(4_1)$ is (relatively) profinitely rigid as claimed, and so $4_1$ satisfies the hypotheses of Theorem~\ref{mainthm} which proves the corollary.
\end{proof}

To establish Theorem~\ref{mainthm}, we will use the following lemma:
\begin{lemma}\label{mainlem}
Let $J$ and $K$ be hyperbolic knots in $S^3$ with $\lambda_J,\lambda_K$ the homological longitudes of $J$ and $K$ respectively. If $\widehat{\pi}_1(S^3\setminus J)\cong\widehat{\pi}_1(S^3\setminus K)$ then, upon identification, $\langle\lambda_J\rangle$ and $\langle\lambda_K\rangle$ have conjugate closures in $\widehat{\pi}_1(S^3\setminus K)$.  
\begin{proof}
Let $\mu_J,\mu_K$ be the meridians of $J$ and $K$ respectively, and $P_J=\langle\mu_J,\lambda_J\rangle$ and $P_K=\langle\mu_K,\lambda_K\rangle$ in $\pi_1(S^3\setminus J)$ and $\pi_1(S^3\setminus K)$ respectively. By the main theorem of \cite{Hamilton}, abelian subgroups of 3-manifold groups are separable and so $\overline{\langle\lambda_J\rangle}\cong\widehat{\langle\lambda_J\rangle}$, $\overline{P}_J\cong \widehat{P}_J<\widehat{\pi}_1(S^3\setminus J)$ and $\overline{\langle\lambda_K\rangle}\cong\widehat{\langle\lambda_K\rangle}$, $\overline{P}_K\cong\widehat{P}_K<\widehat{\pi}_1(S^3\setminus K)$. \medbreak\noindent Let $\phi:\widehat{\pi}_1(S^3\setminus J)\to \widehat{\pi}_1(S^3\setminus K)$ be an isomorphism. By Theorem 9.3 of \cite{WZ1}, it follows that $\phi(\widehat{P}_J)$ and $\widehat{P}_K$ are conjugate in $\widehat{\pi}_1(S^3\setminus K)$; so there exists $g\in\widehat{\pi}_1(S^3\setminus K)$ such that $g\phi(\widehat{P}_J)g^{-1}=\phi(\widehat{P}_K)$. Since $\langle\lambda_K\rangle$ is the intersection of $P_K$ with the kernel of the unique epimorphism $\pi_1(S^3\setminus K)\twoheadrightarrow\Z$, it follows that $\widehat{\langle\lambda_K\rangle}$ is the intersection of $\widehat{P}_K$ with the kernel of the induced epimorphism $\widehat{\pi}_1(S^3\setminus K)\twoheadrightarrow\hat{\Z}$. To see this, we observe that any element of $\widehat{P}_K$ is a $\widehat{\Z}-$linear combination of $\mu_K,\lambda_K$, and so if any element of $\widehat{P}_K$, say $\alpha=c_1\mu_K+c_2\lambda_K$ with $c_1,c_2\in\widehat{\Z}$, is in the kernel of the epimorphism $\widehat{\pi}_1(S^3\setminus K)\twoheadrightarrow\hat{\Z}$ induced by the unique epimorphism $\pi_1(S^3\setminus K)\twoheadrightarrow\Z$, $c_1=0$ because the image of $\mu_K$ in $\widehat{\Z}$ is non-trivial while the image of $\lambda_K$ is trivial. As $c_1=0$, $\alpha\in\widehat{\langle\lambda_K\rangle}$. \medbreak\noindent Next, observe that since the epimorphism $\pi_1(S^3\setminus J)\twoheadrightarrow \Z$ is unique, it can also be described as the homomorphism obtained by the following composition of maps  $$\pi_1(S^3\setminus J)\hookrightarrow \widehat{\pi}_1(S^3\setminus J)\overset{\phi}{\to}  \widehat{\pi}_1(S^3\setminus K)\twoheadrightarrow\hat{\Z}\,\,\,\,(\clubsuit)$$
where the first map is the canonical inclusion of $\pi_1(S^3\setminus J)$ into its profinite completion, and the last map $\widehat{\pi}_1(S^3\setminus K)\twoheadrightarrow\hat{\Z}$ is the epimorphism induced by the unique epimorphism $\pi_1(S^3\setminus K)\twoheadrightarrow\Z$. Furthermore, $\widehat{\langle\lambda_J}\rangle$ is the intersection of $\overline{P_J}\cong\widehat{P}_J$ with the kernel of the map $$\widehat{\pi}_1(S^3\setminus J)\overset{\phi}{\to}  \widehat{\pi}_1(S^3\setminus K)\twoheadrightarrow\hat{\Z}$$ because $\langle\lambda_J\rangle$ is the intersection of $P_J$ with the kernel of the unique epimorphism $\pi_1(S^3\setminus J)\twoheadrightarrow\Z$ which we have noted is the same map as the composition of maps $(\clubsuit)$. Since $g^{-1}\lambda_Kg$ is in the intersection of $\phi(\widehat{P}_J)$ with the kernel of the epimorphism $\widehat{\pi}_1(S^3\setminus K)\twoheadrightarrow\hat{\Z}$, $g^{-1}\lambda_Kg\in\widehat{\langle\phi(\lambda_J)\rangle}$. By interchanging $J$ and $K$, we can apply the foregoing argument to show that $g\phi(\lambda_J)g^{-1}\in\widehat{\langle\lambda_K\rangle}$ and so the closures of $\langle\phi(\lambda_J)\rangle$ and $\langle\lambda_K\rangle$ are conjugate in $\widehat{\pi}_1(S^3\setminus K)$.
\end{proof}
\end{lemma}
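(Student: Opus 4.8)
The plan is to recognize each homological longitude intrinsically inside the profinite completion, so that the isomorphism $\phi\colon\widehat{\pi}_1(S^3\setminus J)\to\widehat{\pi}_1(S^3\setminus K)$ is forced to carry $\overline{\langle\lambda_J\rangle}$ onto a conjugate of $\overline{\langle\lambda_K\rangle}$. First I would fix meridian–longitude pairs, setting $P_J=\langle\mu_J,\lambda_J\rangle$ and $P_K=\langle\mu_K,\lambda_K\rangle$ for the peripheral subgroups. Since these are abelian subgroups of $3$-manifold groups, Hamilton's separability theorem \cite{Hamilton} lets me replace closures by profinite completions, identifying $\overline{\langle\lambda_J\rangle}\cong\widehat{\langle\lambda_J\rangle}$ and $\overline{P}_J\cong\widehat{P}_J$ (and likewise for $K$); in particular each $\widehat{P}$ is a free $\widehat{\Z}$-module of rank $2$ on the images of the meridian and longitude.

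Next I would import the conjugacy of the peripheral structure. Theorem 9.3 of \cite{WZ1} guarantees that $\phi(\widehat{P}_J)$ and $\widehat{P}_K$ are conjugate in $\widehat{\pi}_1(S^3\setminus K)$, so after conjugating by some $g$ I may assume $\phi$ matches the two peripheral subgroups. The crux is then to pin down which line inside a rank-$2$ peripheral module is the longitude. I would use that the homological longitude is exactly the intersection of $P_K$ with the kernel of the abelianization $\pi_1(S^3\setminus K)\twoheadrightarrow H_1(S^3\setminus K)\cong\Z$: writing a general peripheral element as $c_1\mu_K+c_2\lambda_K$, it lies in the kernel iff $c_1=0$, since the meridian generates homology while the longitude is null-homologous. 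Passing to completions, $\widehat{\langle\lambda_K\rangle}$ is precisely $\widehat{P}_K\cap\ker(\widehat{\pi}_1(S^3\setminus K)\twoheadrightarrow\widehat{\Z})$, and the identical statement holds for $J$.

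The main obstacle, and the step requiring care, is compatibility of the two maps to $\widehat{\Z}$ under $\phi$. A priori $\phi$ need not commute with the abelianizations, so I cannot simply transport the kernel. The resolution is that the epimorphism $\pi_1(S^3\setminus K)\twoheadrightarrow\Z$ is \emph{unique} (because $H_1$ of a knot complement is infinite cyclic), hence so is the induced $\widehat{\pi}_1(S^3\setminus K)\twoheadrightarrow\widehat{\Z}$; consequently the composite $\pi_1(S^3\setminus J)\hookrightarrow\widehat{\pi}_1(S^3\setminus J)\overset{\phi}{\to}\widehat{\pi}_1(S^3\setminus K)\twoheadrightarrow\widehat{\Z}$ must agree with the abelianization of $\pi_1(S^3\setminus J)$, and therefore $\widehat{\langle\lambda_J\rangle}$ is cut out inside $\widehat{P}_J$ by the pullback along $\phi$ of the kernel of $\widehat{\pi}_1(S^3\setminus K)\twoheadrightarrow\widehat{\Z}$. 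Combining this with the peripheral conjugacy shows $g^{-1}\lambda_K g$ lies in $\widehat{\langle\phi(\lambda_J)\rangle}$ and, by the symmetric argument interchanging $J$ and $K$, that $g\phi(\lambda_J)g^{-1}\in\widehat{\langle\lambda_K\rangle}$; hence the closures of $\langle\phi(\lambda_J)\rangle$ and $\langle\lambda_K\rangle$ are conjugate, as desired.
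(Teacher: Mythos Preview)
Your proposal is correct and follows essentially the same route as the paper's proof: you invoke Hamilton's separability of abelian subgroups, Theorem~9.3 of \cite{WZ1} for conjugacy of the profinite peripheral subgroups, characterize the longitude closure as the intersection of $\widehat{P}$ with the kernel of the map to $\widehat{\Z}$, and use uniqueness of the epimorphism to $\Z$ to transport this characterization through $\phi$, finishing by symmetry. The only cosmetic point is that ``unique'' should really be ``unique up to sign'', but since only the kernel is used this does not affect the argument (and the paper makes the same elision).
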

With this lemma in hand, we can prove Theorem~\ref{mainthm}. We recall the hypotheses; for a knot $K$, the slope 0 is characterizing for $K$ and the 0-Dehn surgery on $K$ is (relatively) profinitely rigid.   
\begin{proof}\label{proofmainthm}
(Theorem~\ref{mainthm}) Let $J$ be a knot in $S^3$ with $\widehat{\pi}_1(S^3\setminus J)\cong\widehat{\pi}_1(S^3\setminus K)$. By Theorem A of \cite{WZ1}, $J$ is a hyperbolic knot. Assuming that $Q$ is a finite quotient of $\pi_1(S^3_0(K))$, we can precompose with the Dehn-filling epimorphism $\pi_1(S^3\setminus K)\twoheadrightarrow \pi_1(S^3_0(K))$ to obtain an epimorphism from $\pi_1(S^3\setminus K)$ to $Q$ under which $\lambda_K$ maps trivially. As in the proof of Lemma~\ref{mainlem}, we can choose some identification $\phi:\widehat{\pi}_1(S^3\setminus J)\cong\widehat{\pi}_1(S^3\setminus K)$, and thereby obtain an epimorphism $\pi_1(S^3\setminus J)\twoheadrightarrow Q$ by the following composition: $$\pi_1(S^3\setminus J)\hookrightarrow \widehat{\pi}_1(S^3\setminus J)\overset{\phi}{\to}  \widehat{\pi}_1(S^3\setminus K)\twoheadrightarrow Q$$  By Lemma~\ref{mainlem}, since $\langle\phi(\lambda_J)\rangle$ and $\langle\lambda_K\rangle$ have conjugate closures and $\lambda_K$ maps trivially, $\lambda_J$ will also map trivially. It follows that $Q$ is a finite quotient of $\pi_1(S^3_0(J))$.\medbreak\noindent Applying this argument with the roles of $J$ and $K$ reversed shows that every finite quotient of $\pi_1(S^3_0(J))$ is also a finite quotient of $\pi_1(S^3_0(K))$, and we conclude that $\widehat{\pi}_1(S^3_0(J))\cong\widehat{\pi}_1(S^3_0(K))$. 
\medbreak\noindent However by the second hypothesis, $S^3_0(K)$ is (relatively) profinitely rigid and so $S^3_0(J)$ is homeomorphic to $S^3_0(K)$. By the first hypothesis, $0$ is a characterizing slope for $K$ and so $J$ and $K$ are isotopic. 
\end{proof}
\begin{rmk}
Note that the knots $Wh^+(T_{2,3},2)$ and $Wh^-(T_{2,3},2)$ are satellite knots and thus the proof of Theorem~\ref{mainthm} does not apply.
\end{rmk}
\bibliography{main}
\bigbreak\noindent Department of Mathematics,
\smallbreak\noindent Rice University,
\smallbreak\noindent Houston, TX, 77005,
\smallbreak\noindent Email: tcw@rice.edu
\end{document}